\theoremstyle{plain}
\newtheorem{thm}{Theorem}[section]
\newtheorem{theorem}[thm]{Theorem}
\newtheorem{lemma}[thm]{Lemma}
\newtheorem{proposition}[thm]{Proposition}
\newtheorem{remark}[thm]{Remark}
\numberwithin{equation}{section}
\title[Properness of identity plus linear powers: part 2] {Some observations on the properness of identity plus linear powers: part 2}
\author{Tuyen Trung Truong}
\subjclass[2010]{13B25, 14Qxx, 14Rxx, 15Bxx}
\keywords{Druzkowski maps; Groebner basis; Jacobian conjecture; Properness}
\date{\today}
\address{University of Oslo, 0316 Oslo, Norway }\email{tuyentt@math.uio.no}
\begin{document}

\begin{abstract}
\noindent 

This paper develops our previous work on properness of a class of maps related to the Jacobian conjecture. The paper has two main parts: 

- In part 1, we explore properties of the set of non-proper values $S_f$ (as introduced by Z. Jelonek) of these maps. In particular, using a general criterion for non-properness of these maps, we show that under a "generic condition" (to be precise later) $S_f$ contains $0$ if it is non-empty. This result is related to a conjecture in our previous paper. We obtain this by use of a "dual set" to $S_f$, particularly designed for the special class of maps. 

- In part 2, we use the non-properness criteria obtained in our work to construct a counter-example to the proposed proof in arXiv:2002.10249 of the Jacobian conjecture.  

In the conclusion, we present some comments pertaining the Jacobian conjecture and properness of polynomial maps in general. 
 
 \end{abstract}

\maketitle


We recall that the famous Jacobian Conjecture is the following statement:

\noindent
{\bf Jacobian Conjecture.} Let $F=(F_1,\ldots ,F_m):\mathbb{C}^m\rightarrow \mathbb{C}^m$ be a polynomial map such that $JF$ (the Jacobian matrix $(\partial F_i/\partial x_j)_{1\leq i,j\leq m}$) is invertible at every point. Then, $F$ has a polynomial inverse. 

Despite efforts from many researchers and insights and techniques from many different fields, the Jacobian Conjecture is still open even in dimension $2$.  One can reduce the truth of the Jacobian conjecture to studying some properties of a special class of maps, which we present next, see \cite{druzkowski}. 

For two vectors $x,y\in \mathbb{R}^m$ we define $x*y=(x_1y_1,\ldots ,x_my_m)$, and if $x=y$ we denote $x^2=x*x$. More generally, provided a power $\alpha$ is valid for all coordinates of $x$, we denote $x^{\alpha}=(x_1^{\alpha},\ldots ,x_m^{\alpha})$. This is the case for example when $\alpha =p/q$ where $p,q$ are positive integers and $q$ is odd. It is also the case for a general rational number $\alpha =p/q$, $p$ can be negative and $q$ is odd, provided all the coordinates of $x$ are non-zero. If $B$ is a set whose all elements can take power to the $\alpha$, we then define $B^{\alpha}=\{x^{\alpha}:~x\in B\}$.  

For an $m\times m$ matrix $A$ with real coefficients, which can be thought of as a linear map $A:~\mathbb{R}^m\rightarrow \mathbb{R}^m$, we denote by $Ker(A)=\{x:~Ax=0\}$ the kernel of $A$ and $Im(A)=\{z:~z\in A(\mathbb{R}^m)\}$ the image of $A$. We denote by $A^T$ the transpose of $A$. We have the following Kernel-Image theorem in Linear Algebra: $Im(A^T)$ and $Ker(A)$ are orthogonal with respect to the inner product $<,>$, and $\mathbb{R}^m=Ker(A)\oplus Im(A^T)$. As a consequence, if $z\in Im(A^T)$ is non-zero, then $Az\not=0$. [To check this last claim, one can simply write $z=A^Tu$ and $<Az,u>=<z,A^Tu>=<z,z>$.] 

Given an $m\times m$ matrix $A$ with real coefficients, we consider the map $F_A(x)=x+(Ax)^3:~\mathbb{R}^m\rightarrow\mathbb{R}^m$. We recall that a map $f$ is proper if the pre-images of every compact sets are compact. Equivalently, a map $f$ is proper iff the image of every unbounded set is unbounded. 
We recall that $A$ is a Druzkowski matrix if $det(JF_A(x))=1$ for all $x$. It is known that the Jacobian conjecture can be reduced to  the properness of all Druzkowski maps $F_A(x)$ on $\mathbb{R}^m$, see for example the exposition in \cite{liu, truong}. It is also known, see \cite{druzkowski}, that if $f:\mathbb{R}^m\rightarrow \mathbb{R}^m$ is a polynomial map so that $f(0)=0$ and $Df(0)$ is invertible, then there  are a new variable $y\in \mathbb{R}^k$, together with two polynomial automorphisms $C,D$ with Jacobian $1$ of $\mathbb{R}^m\times \mathbb{R}^k$ so that 
\begin{eqnarray*}
C\circ (f(x),y)\circ D=F_A(x,y),
\end{eqnarray*}
where $A$ is an $(m+k)\times (m+k)$ matrix. Hence, injectivity, bijectivity and properness of a non-degenerate polynomial map from $\mathbb{R}^m$ to itself can be reduced to that of a map $F_A$. Thus, insights from maps  of the form $F_A$ give rise to insight about general polynomial maps. 

More generally, assume that $f:\mathbb{R}^m\rightarrow \mathbb{R}^{k}$ be a non-degenerate polynomial map, where $k\geq m$. We assume that $f(0)=0$ and $Df(0)$ is non-degenerate. We can assume, without loss of generality, that the left uppermost $m\times m$ minor of $Df(0)$ (constructed by the first components $f_1,\ldots ,f_m$ of $f$). We then can construct a new map 
\begin{eqnarray*}
F(x)=(f_1,\ldots, f_{m-1},f_m+f_m^2+f_{m+1}^2+\ldots +f_k^2).
\end{eqnarray*}
 It can be checked that $F(0)=0$ and $DF(0)$ is invertible. Moreover, $F$ is a polynomial map from $\mathbb{R}^m\rightarrow \mathbb{R}^m$, and it can be checked that $f$ is proper iff $F$ is proper. Therefore,  the reduction from the previous paragraph applies. 
 
 Thus, we see that the the properness of generically finite polynomial maps from affine spaces to affine spaces can be reduced to that of maps of the form $F_A$.  In the previous paper \cite{truong}, we have illustrated that studying properness of maps $F_A$ could be more advantageous than studying properness of general polynomial maps (even though, as seen above, the two questions are the same) because of the simpler form. For example, several obstruction for properness have been obtained in \cite{truong} via linear algebra. This paper aims to give more support to this viewpoint.

\subsection{The sets of non-proper values and non-proper directions} 

Given $f:\mathbb{R}^m\rightarrow \mathbb{R}^m$ a non-degenerate (= dominant, generically finite) map. Following \cite{jelonek, jelonek2}, we consider the following set:

{\bf Definition:  Non-proper value set.} We define $S_f$ to be the set of $z\in \mathbb{R}^m$ so that there is a sequence $x_n\in \mathbb{R}^m$ for which $||x_n||\rightarrow \infty$ and $f(x_n)\rightarrow z$. Since $S_f$ is contained in the image of $f$, we can call it the set of non-proper values of $f$. 

By definition, it follows that $f$ is proper iff $S_f=\emptyset$.  Many properties are known about these sets, for example they are uniruled, and for maps from $\mathbb{C}^m\rightarrow \mathbb{C}^m$ better descriptions are achieved, see  \cite{jelonek, jelonek2, jelonek-lason}. For a relevant result, we mention Corollary 3.3. in \cite{jelonek-lason}, where it is shown that if $f$ is a cubic map then $S_f$ is covered by lines and parabolas. We present first a similar, but simpler characterised, for the set $S_f$ when $f$ is of the form $F_A$. 

\begin{proposition}
Let $F_A:~\mathbb{R}^m\rightarrow \mathbb{R}^m$ be the map $x+(Ax)^3$. If $S_{F_A}$ is non-empty, then 
\begin{eqnarray*}
S_{F_A}=(S_{F_A}\cap Im(A^T))+Ker(A). 
\end{eqnarray*}
In particular, if $S_{F_A}$ is non-empty, then it is covered by lines.

\label{Proposition1}\end{proposition}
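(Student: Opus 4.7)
My plan is to verify the two inclusions separately using the orthogonal decomposition $\R^m = Ker(A)\oplus Im(A^T)$ recalled in the excerpt, and then derive the ``in particular'' clause. Write $P$ and $Q$ for the orthogonal projections onto $Im(A^T)$ and $Ker(A)$, respectively. For the inclusion $\supseteq$, given $z_1\in S_{F_A}\cap Im(A^T)$ realised by a sequence $x_n$ with $\|x_n\|\to\infty$ and $F_A(x_n)\to z_1$, and given $w\in Ker(A)$, I would consider $x_n+w$. Since $Aw=0$ we have $A(x_n+w)=Ax_n$, hence
\begin{eqnarray*}
F_A(x_n+w)=(x_n+w)+(Ax_n)^3=F_A(x_n)+w\longrightarrow z_1+w,
\end{eqnarray*}
and $\|x_n+w\|\to\infty$, showing $z_1+w\in S_{F_A}$.

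For the harder inclusion $\subseteq$, pick $z\in S_{F_A}$ witnessed by a sequence $x_n$, and decompose $x_n=y_n+w_n$ with $y_n=P(x_n)\in Im(A^T)$ and $w_n=Q(x_n)\in Ker(A)$. Since $Aw_n=0$ we have $Ax_n=Ay_n$, so writing $(Ay_n)^3=u_n+v_n$ with $u_n=P((Ay_n)^3)$ and $v_n=Q((Ay_n)^3)$, and $z=z_1+z_2$ with $z_1=P(z)$ and $z_2=Q(z)$, the limits split as $y_n+u_n\to z_1$ and $w_n+v_n\to z_2$. The decisive sub-claim is that $\|y_n\|\to\infty$: otherwise, along a subsequence on which $y_n$ is bounded, $(Ay_n)^3$ is bounded and so are $u_n$ and $v_n$; then $w_n=(w_n+v_n)-v_n$ is bounded too, contradicting $\|x_n\|\to\infty$. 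Granted this, set $x'_n=y_n-v_n$. Because $v_n\in Ker(A)$ we have $Ax'_n=Ay_n$, hence
\begin{eqnarray*}
F_A(x'_n)=y_n-v_n+(Ay_n)^3=y_n+u_n\longrightarrow z_1,
\end{eqnarray*}
while orthogonality gives $\|x'_n\|^2=\|y_n\|^2+\|v_n\|^2\geq\|y_n\|^2\to\infty$. Thus $z_1\in S_{F_A}\cap Im(A^T)$ and $z=z_1+z_2$ lies in $(S_{F_A}\cap Im(A^T))+Ker(A)$.

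For the final clause I would first observe that an invertible $A$ forces $F_A$ to be proper: under the change of variable $y=Ax$ the map becomes $g(y)=A^{-1}y+y^3$, and if $g(y_n)$ stayed bounded while $\|y_n\|\to\infty$ then componentwise $|y_{n,i}|^3=O(\|y_n\|)$, so $\|y_n\|^2\leq m\,\max_i y_{n,i}^2=O(\|y_n\|^{2/3})$, which is impossible. Hence $S_{F_A}\neq\emptyset$ forces $Ker(A)\neq 0$, and the structural equality just proved then exhibits $S_{F_A}$ as a union of affine translates of a non-trivial linear subspace, so it is covered by lines. The only substantive obstacle in the whole argument is the sub-claim $\|y_n\|\to\infty$ in the $\subseteq$ direction; once that is in place, the kernel correction $-v_n$ is exactly what is needed to push $F_A(x'_n)$ into $Im(A^T)$ while keeping the norm unbounded.
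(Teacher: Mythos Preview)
Your proof is correct and follows essentially the same route as the paper's: both use the orthogonal splitting $\R^m=Ker(A)\oplus Im(A^T)$, the identity $F_A(x+w)=F_A(x)+w$ for $w\in Ker(A)$, and the kernel-corrected sequence $x'_n=y_n-Q((Ay_n)^3)$, which (once one unwinds the paper's citation of \cite{truong}) is exactly the sequence the paper constructs. Your version is slightly more self-contained in that you prove the sub-claim $\|y_n\|\to\infty$ directly rather than importing it, and you supply the missing verification that $Ker(A)\neq 0$ whenever $S_{F_A}\neq\emptyset$, which the paper leaves implicit for the ``covered by lines'' clause.
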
 
\begin{proof}
We recall the following result in \cite{truong}: The map $F_A$ is proper iff there is a sequence $y_n\in Im(A^T)$ so that $||y_n||\rightarrow \infty$ and $Ay_n+A(Ay_n)^3$ is bounded. For one direction, whenever $x_n+y_n\mathbb{R}^m$ is a sequence with $x_n\in Ker(A)$ and $y_n\in Im(A^T)$ such that $||x_n+y_n||\rightarrow \infty$ and $F_A(x_n+y_n)$ is bounded, then $||y_n||\rightarrow \infty$ and $Ay_n+A(Ay_n)^3$ is bounded. For the other direction, if $y_n\in Im(A^T)$ so that $||y_n||\rightarrow \infty$ and $Ay_n+A(Ay_n)^3$ is bounded, then we choose $v_n\in im(A^T)$ such that $Av_n=Ay_n+A(Ay_n)^3$. Then it follows that $v_n$ is bounded, $x_n=v_n-(y_n+(Ay_n)^3)\in Ker(A)$, $||x_n+y_n||\rightarrow \infty$ and $F_A(x_n+y_n)=v_n$ is bounded. Hence, any cluster point $v$ of $v_n$, which is an element of $Im(A^T)$, is contained in $S_f$. 

Now the proof is complete by noticing that if $v\in S_f$, then $v+w\in S_f$ for all $w\in Ker(A)$. Indeed, let $x_n$ be a sequence for which $||x_n||\rightarrow \infty$ and $F_A(x_n)\rightarrow v$. Then $F_A(x_n+w)=F_A(x_n)+w\rightarrow v+w$.   
\end{proof}

Inspired by \cite{liu}, we considered in \cite{truong} a dual viewpoint about the properness of a map $F_A$, by looking on the domain of the map, by making use of special properties of $F_A$ which are not available for a general polynomial map.  To make the presentation simpler, let us denote another map $\widehat{F_A}(x)=x+A(x^3)$. It has been shown in \cite{truong} that $F_A$ is proper iff the restriction of $\widehat{F_A}$ to $Im(A)$ is proper (cf. also the proof of Proposition \ref{Proposition1}). We formalise this dual viewpoint by the following definition.

{\bf Definition: Non-proper direction set.} We let $\widehat{S}_{A}$ to be the set of $x\in Im(A)$ so that there is a sequence $x_n$ for which $||x_n||\rightarrow \infty$, $\widehat{F_A}(x_n) $ is bounded and $x_n/||x_n||\rightarrow x$. Since $\widehat{S}_{A}$ contains only points in the real unit sphere inside the domain of the  map, we can call it the non-proper direction set. 

Like $S_{F_A}$, the set  $\widehat{S}_{A}$ can also be used to characterise the properness of $F_A$. The following simple lemma provides a first connection between the two dual viewpoints. 
\begin{lemma}
Let $x_n$ be a sequence so that $||x_n||\rightarrow \infty$, $x_n+(Ax_n)^3\rightarrow \alpha$ and $x_n/||x_n||\rightarrow x_{\infty}$. Then $\alpha +\mathbb{R}x_{\infty}\subset S_{F_A}$.
\label{Lemma1}\end{lemma}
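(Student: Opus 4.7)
The plan has two steps. First, I will show that the limit direction $x_\infty$ must lie in $Ker(A)$; second, once this is known, I will exhibit each point of $\alpha + \mathbb{R} x_\infty$ as a non-proper value by considering translated sequences of the form $x_n + t x_\infty$.

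For the first step, the key observation is a growth-rate comparison. The relation $x_n + (Ax_n)^3 \to \alpha$ forces each coordinate of $(Ax_n)^3$ to be of size $O(||x_n||)$, since it equals a bounded quantity minus $(x_n)_i$. On the other hand, $Ax_n/||x_n|| \to Ax_\infty$, so if $(Ax_\infty)_i \neq 0$ for some coordinate $i$ then $(Ax_n)_i^3$ would grow like $(Ax_\infty)_i^3 \, ||x_n||^3$, contradicting the linear bound. Hence every coordinate of $Ax_\infty$ vanishes, i.e., $x_\infty \in Ker(A)$.

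For the second step, fix $t \in \mathbb{R}$ and set $y_n = x_n + t x_\infty$. Since $x_\infty \in Ker(A)$, we have $Ay_n = Ax_n$, so $F_A(y_n) = (x_n + t x_\infty) + (Ax_n)^3 = F_A(x_n) + t x_\infty \to \alpha + t x_\infty$. Moreover $||y_n|| \to \infty$ since $||x_n|| \to \infty$ and $t x_\infty$ is a fixed vector. By definition of $S_{F_A}$, this gives $\alpha + t x_\infty \in S_{F_A}$, and letting $t$ range over $\mathbb{R}$ yields $\alpha + \mathbb{R} x_\infty \subset S_{F_A}$, as claimed.

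I expect the first step, the growth-rate comparison producing $x_\infty \in Ker(A)$, to be the main content of the proof, though it is essentially forced by the cubic shape of $F_A$ together with the boundedness of $F_A(x_n)$. The second step is then a one-line verification using $Ax_\infty = 0$ and the fact that translation by a kernel vector shifts $F_A$ by that same vector.
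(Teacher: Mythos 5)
Your proof is correct, but it takes a genuinely different route from the paper. The paper follows the standard Jelonek-style scaling argument: it sets $\gamma_n=\|x_n\|$, $\epsilon_n=1/\gamma_n$, and considers $x_n'=(1-\epsilon_n t)x_n$; because the cubic term rescales by $(1-\epsilon_n t)^3\approx 1-3\epsilon_n t$ while the linear term rescales by $1-\epsilon_n t$, and $(Ax_n)^3=\alpha-x_n+o(1)$, the mismatch produces $F_A(x_n')\to\alpha\pm 2t x_\infty$, sweeping out the line as $t$ varies. You instead first extract the structural fact that $x_\infty\in Ker(A)$ --- your growth-rate comparison is sound, since $(Ax_n)_i^3=O(\|x_n\|)$ coordinatewise while $(Ax_n)_i\sim (Ax_\infty)_i\|x_n\|$ would force cubic growth --- and then invoke the translation identity $F_A(x+w)=F_A(x)+w$ for $w\in Ker(A)$ applied to $y_n=x_n+tx_\infty$. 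In effect you reduce the lemma to the same kernel-translation observation the paper uses in the proof of Proposition \ref{Proposition1}, and you get the extra conclusion $x_\infty\in Ker(A)$ for free, which ties the lemma more tightly to the decomposition $S_{F_A}=(S_{F_A}\cap Im(A^T))+Ker(A)$. The trade-off is that your argument leans on the special additive structure of $F_A$ (translation by kernel vectors), whereas the paper's rescaling argument is the one that generalizes to arbitrary polynomial maps and underlies the uniruledness of $S_f$ in general.
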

\begin{proof}
We follow the ideas in \cite{jelonek, jelonek2, jelonek-lason}. Let $x_n$ be as in the assumptions of the lemma, and put $\gamma _n=||x_n||$. Then we have $\gamma _n\rightarrow \infty$. We choose $\epsilon _n=1/\gamma _n$ and consider the sequence $x_n'=(1-\epsilon _nt)x_n$, where $t\in \mathbb{R}$ is fixed. 
It can be checked that $||x_n'||\rightarrow \infty$. Then, it can be checked that $F_A(x_n')\rightarrow \alpha -2tx_{\infty}$.

\end{proof}

In \cite{truong} we provided some obstructions for a vector $x$ to be in $\widehat{S}_{A}$. In particular, it is shown in Theorem 1.10 in the mentioned paper that if $Im(A)$ contains $Ker(A)^{1/3}$ and $x$ has all coordinates being non-zero, then $x\in \widehat{S}_{A}$ iff $x^3\in Ker(f)$ and there is $u\in  Im(A)$ for which $x+A(u*x^2)=0$. We now present the first main result of this paper, where we provide a  generalisation of Theorem 1.10 in \cite{truong} to the case where the condition $Ker(A)^{1/3}\subset Im(A)$ is not needed. After that, we will use this new result to obtain a sufficient condition for $0\in S_{F_A}$. In the proof of Theorem \ref{Theorem1}, we will use the following notation: If $x=(x_1,\ldots ,x_m)$ is a vector, then $\Delta [x]$ is the diagonal matrix whose $(i,i)$-th entry is $x_i$. 

\begin{theorem}
Let $V\subset \mathbb{R}^m$ be a vector subspace. Assume that $Im(A)\subset V$, and choose $x_{\infty}\in V$. Assume moreover that all coordinates of $x_{\infty}$  are non-zero. The following three statements are equivalent: 

1) There exists a sequence $x_n\in V$ for which $||x_n||\rightarrow \infty$, $\widehat{F_A}(x_n)$ is bounded, and $x_n/||x_n||\rightarrow x_{\infty}$. 

2) There exist sequences $z_n\in Ker(A)$ and $v_n\in V$ for which $||z_{n}||\rightarrow \infty$, $z_n^{1/3}+A(z_n^{2/3}*v_n)$ is bounded, and $z_n^{1/3}/||z_{n}^{1/3}||\rightarrow x_{\infty}$.   

3) We must have $x_{\infty}^3\in Ker(A)$, and there exists $v\in V$ so that $x_{\infty}+A(x_{\infty}^2*v)=0$. 

\label{Theorem1}\end{theorem}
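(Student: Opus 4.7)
The plan is to establish the cycle $(1) \Rightarrow (2) \Rightarrow (3) \Rightarrow (1)$.

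The implication $(3) \Rightarrow (1)$ is by direct construction. Given $v \in V$ as in (3), consider the ansatz $x_n := n\, x_{\infty} + v/(3n) \in V$. Expanding $x_n^3$ as a trinomial of componentwise products and applying the two identities $A(x_{\infty}^3) = 0$ and $A(x_{\infty}^2 * v) = -x_{\infty}$, the $n^3$-order contribution $n^3 A(x_{\infty}^3)$ vanishes and the $n$-order contribution $n A(x_{\infty}^2 * v) = -n x_{\infty}$ exactly cancels the linear-in-$n$ part of $x_n$. What remains is $\widehat{F_A}(x_n) = v/(3n) + (1/(3n)) A(x_{\infty} * v^2) + O(1/n^3) \to 0$; the conditions $\|x_n\| \to \infty$ and $x_n/\|x_n\| \to x_{\infty}$ are immediate.

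For $(1) \Rightarrow (2)$, decompose $x_n^3 = z_n + r_n$ orthogonally with $z_n := P_{\mathrm{Ker}(A)}(x_n^3) \in \mathrm{Ker}(A)$ and $r_n \in \mathrm{Im}(A^T)$. Since $A(r_n) = A(x_n^3)$ and $x_n + A(x_n^3)$ is bounded, $\|A(r_n)\| = O(\|x_n\|)$; the bijectivity of $A|_{\mathrm{Im}(A^T)}$ onto $\mathrm{Im}(A)$ then gives $\|r_n\| = O(\|x_n\|)$, which is negligible compared to $\|x_n^3\| = \Theta(\|x_n\|^3)$. For $n$ large, $x_n$ has no vanishing coordinate (since $x_{\infty}$ does not), so componentwise Taylor expansion of the cube root yields $z_n^{1/3} = x_n - \tfrac{1}{3}\, r_n * x_n^{-2} + O(\|x_n\|^{-3})$ and $z_n^{2/3} = x_n^2 - \tfrac{2}{3}\, r_n * x_n^{-1} + O(\|x_n\|^{-2})$. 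Taking $v_n := 3\, x_n \in V$ and using $A(r_n) = A(x_n^3)$, a direct computation gives
\[
z_n^{1/3} + A(z_n^{2/3} * v_n) = x_n + A(x_n^3) + O(\|x_n\|^{-1}),
\]
which is bounded; the remaining conditions of (2) follow from $z_n^{1/3} = x_n + o(1)$ and $\|z_n\| = \Theta(\|x_n\|^3)$.

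The key step is $(2) \Rightarrow (3)$. Set $\gamma_n := \|z_n^{1/3}\| \to \infty$, $p_n := z_n^{1/3}/\gamma_n \to x_{\infty}$, and $u_n := \gamma_n v_n \in V$. The hypothesis $z_n \in \mathrm{Ker}(A)$ becomes $A(p_n^3) = 0$, whose limit $A(x_{\infty}^3) = 0$ is the first assertion of (3). Dividing the bounded quantity by $\gamma_n$ yields $p_n + A(p_n^2 * u_n) \to 0$, hence $A(p_n^2 * u_n) \to -x_{\infty}$. If $u_n$ is bounded, a convergent subsequence $u_n \to v \in V$ gives $A(x_{\infty}^2 * v) = -x_{\infty}$, finishing (3). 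The principal obstacle is the case when $u_n$ is unbounded, where one must exploit the extra constraint $A(p_n^3) = 0$. Setting $q_n := P_V(p_n) \to x_{\infty}$, the identity $A(p_n^2 * q_n) = -A(p_n^2 * (p_n - q_n)) \to 0$ shows that replacing $u_n$ by $u_n + c_n q_n$ for suitable scalars $c_n$ perturbs the limit of $A(p_n^2 * u_n)$ only negligibly. I plan to use this freedom, combined with an iterative blow-up along directions in $\mathrm{Ker}(T) \cap V$ (where $T(u) := A(x_{\infty}^2 * u)$), to reduce $u_n$ to a bounded sequence; since $\mathrm{Ker}(T) \cap V$ is finite-dimensional, the process should terminate in finitely many steps. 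The delicate technical point, and main obstacle, is quantifying the rate at which $A(p_n^2 * q_n)$ vanishes against the rate at which $u_n$ blows up, so that each step of the iterative reduction is valid.
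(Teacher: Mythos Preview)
Your arguments for $(3)\Rightarrow(1)$ and $(1)\Rightarrow(2)$ are correct and coincide with the paper's: the same ansatz $x_n=\gamma_n x_\infty+\tfrac{1}{3\gamma_n}v$ is used for the former, and the same decomposition $x_n^3=z_n+r_n$ with $z_n\in\mathrm{Ker}(A)$, $r_n\in\mathrm{Im}(A^T)$, followed by a cube-root Taylor expansion and the choice $v_n=3x_n$, is used for the latter.

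The genuine gap is in $(2)\Rightarrow(3)$, exactly where you locate it: controlling $u_n=\gamma_n v_n$ when it is a priori unbounded. Your proposed ``iterative blow-up along $\mathrm{Ker}(T)\cap V$'' is problematic. The only kernel direction you can exploit via the identity $A(p_n^3)=0$ is $q_n\approx x_\infty$ itself; if $u_n$ blows up along some other $w\in\mathrm{Ker}(T)\cap V$, you only know $A(p_n^2*w)\to 0$ at rate $O(\|p_n-x_\infty\|)$, and there is no mechanism tying this rate to the growth of $u_n$. So the ``delicate technical point'' you flag is not just delicate---it is the whole difficulty, and the inductive scheme as sketched does not close.

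The paper's resolution is conceptually simpler and avoids any iteration: rather than trying to reduce the \emph{given} $v_n$, it \emph{discards} it and picks a new solution of the same linear equation with controlled norm. Concretely, write the boundedness condition as $A_n v_n = w_n$ where $A_n:=\Delta[p_n^{-2}]\,A\,\Delta[p_n^{2}]$ (restricted to $V$) and $w_n:=\Delta[p_n^{-2}](-p_n+O(1/\gamma_n))$, so that $\|w_n\|=O(1/\gamma_n)$. Since $p_n\to x_\infty$ with all coordinates nonzero, the conjugating diagonals and their inverses converge, hence $A_n\to A_\infty:=\Delta[x_\infty^{-2}]A\,\Delta[x_\infty^{2}]$. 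The paper then invokes a uniform bound: because $A_n\to A_\infty$ via convergent invertible conjugations, there is $C>0$ such that every $w\in\mathrm{Im}(A_n)$ admits a preimage $v\in V$ with $\|v\|\le C\|w\|$. Applying this to $w_n$ produces a \emph{new} sequence $v_n\in V$ with $\|v_n\|=O(1/\gamma_n)$ still satisfying (2); now $u_n=\gamma_n v_n$ is bounded and your own limiting argument finishes (3) immediately. The moral: the freedom in (2) is not in perturbing $u_n$ along kernel directions one at a time, but in replacing $v_n$ wholesale by a near-minimal-norm solution of the linear system, whose existence is guaranteed by the convergence of the conjugated operators.
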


\begin{proof}
{\bf Proof of 1) $\Rightarrow $ 2)}: Let $x_n$ be in the assumptions of part 1). We denote $||x_n||=\gamma _n$, then $\gamma _n\rightarrow \infty$. As in the proof of Theorem 1.10 in \cite{truong}, we  can write $x_n^3=z_n+u_n$, with $z_n\in Ker(A)$ and $u_n\in Im(A^T)$ for which $x_n+Au_n$ is bounded. Since there is a constant $c>0$ for which $||Au||\geq c||u||$ for all $u\in Im(A^T)$, it follows that $||u_n||\sim ||x_n||=\gamma _n$ and hence $||z_n||\sim ||x_n^3||\sim \gamma _n^3$. Also, it follows from the assumption on $x_n$ that $z_n^{1/3}/\gamma _n\rightarrow x_{\infty}$. Since all coordinates of $x_{\infty}$ are non-zero, it follows that all coordinates of $z_n$ grow like $\gamma _n^3$. Hence, writing $x_n=(z_n+u_n)^{1/3}=z_n^{1/3}*(1+u_n*z_n^{-1})^{1/3}$ (here the $1$ in the bracket means the vector all coordinates are $1$)  and using Taylor's expansion, we obtain
\begin{eqnarray*}
x_n=z_n^{1/3}+\frac{1}{3}u_n*z_{n}^{-2/3} + O(1/\gamma _n^3). 
\end{eqnarray*}
 Here we use the big-O notation. Hence, we obtain 
 \begin{eqnarray*}
 u_n=3z_n^{2/3}*x_n-z_n+O(1/\frac{\gamma _n}).
 \end{eqnarray*}
Hence, since $z_n\in Ker(A)$, it follows that 
\begin{eqnarray*}
O(1)&=&x_n+Au_n=(z_n^{1/3}+\frac{1}{3}u_n*z_{n}^{-2/3} + O(1/\gamma _n^3))+A(3z_n^{2/3}*x_n-z_n+O(1/\frac{\gamma _n}))\\
&=&z_n^{1/3}+A(3z_n^{2/3}*x_n).
\end{eqnarray*}
Hence, the conclusion of part 2) follows when we choose $v_n=3x_n\in V$.  

{\bf Proof of 2) $\Rightarrow $ 1)}:  Let $z_n,v_n$ be the sequences in the assumptions of part 2). Let $\gamma _n=||z_n^{1/3}||$, then $\gamma _n\rightarrow\infty$.  By replacing $z_n,v_n$ by $z_n'=\epsilon _n^3z_n$ and $v_n'=v_n/\epsilon _n$, for some appropriate sequence of positive real numbers $\epsilon _n\rightarrow 0$, if needed, we can assume furthermore that
\begin{eqnarray*}
z_n^{1/3}+A(z_n^{2/3}*v_n)=O(1/\gamma _n^3). 
\end{eqnarray*}
(Indeed, it suffices to choose $\epsilon _n=||z_n||^{-3/10}$ for all $n$.)

We will first show that we can assume further that for the above  $v_n$ we have $||z_n^{2/3}*v_n)||\sim ||z_n^{1/3}||=\gamma _n$. Indeed, the above can be written as
\begin{eqnarray*}
A.(z_n^{2/3}.v_n)=-z_n^{1/3}+O(1/\gamma _n^3). 
\end{eqnarray*}
 
We can also write the above as:
\begin{eqnarray*}
\Delta [z_n^{-2/3}].A.\Delta [z_n^{2/3}].v_n=\Delta [z_n^{-2/3}].(-z_n^{1/3}+O(1/\gamma _n)). 
\end{eqnarray*}
We define 
\begin{eqnarray*}
A_n=\Delta [z_n^{-2/3}].A.\Delta [z_n^{2/3}]=\Delta [\gamma _n^2z_n^{-2/3}].A.\Delta [z_n^{2/3}.\gamma _n^{-2}],
\end{eqnarray*}
which is a linear map $A_n:V\rightarrow \mathbb{R}^m$. The second representation of $A_n$, together with the fact that $z_n^{1/3}/\gamma _n\rightarrow x_{\infty}$ and the assumption that all coordinates of $x_{\infty}$ are non-zero, implies that $A_n\rightarrow A_{\infty}=\Delta [z_{\infty}^{-2/3}].A.\Delta [z_{\infty}^{2/3}]$. Therefore, there is a constant $C>0$ so that for all $n$ and all $w_n \in Im(A_n)$, there is $v_n\in V$ for which $A(v_n)=w_n$ and $||v_n||\leq C||w_n||$. Applying this with $w_n=\Delta [z_n{-2/3}].(-z_n^{1/3}+O(1/\gamma _n))\in Im(A_n)$ and that $||w_n||\sim \gamma _n^{-1}$ by calculating, we can find $v_n\in V$ so that $||v_n||\sim \gamma _n^{-1} $ and $z_n^{1/3}+A(z_n^{2/3}*v_n)=O(1/\gamma _n^3)$. 

Because $Im(A)\subset V$ by assumption and $v_n\in V$, the following vector is in $V$: 
\begin{eqnarray*}
x_n=-A(z_n^{2/3}*v_n)+\frac{1}{3}v_n.
\end{eqnarray*}
By the properties of $v_n$ and $z_n$, we have 
\begin{eqnarray*}
x_n=z_n^{1/3}+\frac{1}{3}v_n+O(1/\gamma _n^3). 
\end{eqnarray*}
Since $||v_n||=O(1/\gamma _n)$ and $||z_n||^{1/3}=\gamma _n$ and $\gamma _n\rightarrow\infty$, we obtain: 
\begin{eqnarray*}
x_n^3=z_n+z_n^{2/3}*v_n+O(1/\gamma _n). 
\end{eqnarray*}
Hence, from that $z_n\in Ker(A)$, $z_n^{1/3}+A(z_n^{2/3}*v_n)=O(1/\gamma _n^3)$ and $||v_n||=O(1/\gamma _n)$, we obtain finally: 
\begin{eqnarray*}
x_n+A(x_n^3)&=&[z_n^{1/3}+\frac{1}{3}v_n+O(1/\gamma _n^3)]+A[z_n+z_n^{2/3}*v_n+O(1/\gamma _n)]\\
&=&z_n^{1/3}+A(z_n^{2/3}*v_n)+O(1/\gamma _n)=O(1/\gamma _n). 
\end{eqnarray*}

{\bf Proof of 2) $\Rightarrow$ 3):}  Let $z_n\in Ker(A)$ and  $v_n\in V$ be as in the assumptions of part 2). Then, with $||z_n^{1/3}||=\gamma _n$, as in the proof of Part 2) $\Rightarrow$ Part 1) we can assume that for $\gamma _n\rightarrow\infty$, $||v_n||=O(1/\gamma _n)$ and $z_n^{1/3}/\gamma _n\rightarrow x_{\infty}$. Therefore, $x_{\infty}^3$, which is the limit of $z_n/\gamma _n^3$, must belong to $Ker(A)$. Moreover, from 
\begin{eqnarray*}
z_n^{1/3}+A(z_n^{2/3}*v_n)=O(1),
\end{eqnarray*}
when dividing by $\gamma _n$ and taking limit we obtain: $x_{\infty}+A(x_{\infty}^2*v)=0$.  Here $v$ is the limit of $\gamma _nv_n$, and hence must be in $V$. 

{\bf Proof of 3) $\Rightarrow $ 1): } This is given in the proof of Theorem 1.10 in \cite{truong}. Let $x_{\infty}$ and $v$ be as in the assumptions of part 3). Then we simply choose
\begin{eqnarray*}
x_n=\gamma _nx_{\infty}+\frac{1}{3\gamma _n}v,
\end{eqnarray*}
 where $\gamma _n\rightarrow\infty$, and check that $x_n\in V$ and $x_n+A(x_n^3)$ is bounded (indeed, converges to $0$).   
 \end{proof}

 The proof of Theorem \ref{Theorem1} also gives right away the following interesting result - the second main result in this paper - which we state separately to emphasise. 
 \begin{theorem}
 Let $V=Im(A)$ and $x_{\infty}\in V$. Assume further that all coordinates of $x_{\infty}$ are non-zero. If $x_{\infty}\in \widehat{S}_A$, then $0\in S_{F_A}$.  
 \label{Theorem2}\end{theorem}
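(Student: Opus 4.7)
The plan is to combine two ingredients already present in the paper: the rescaling in the implication 2) $\Rightarrow$ 1) of Theorem \ref{Theorem1} produces a sequence in $Im(A)$ along which $\widehat{F_A}$ tends \emph{to $0$}, not merely stays bounded; and the construction in the second half of the proof of Proposition \ref{Proposition1} lifts any such sequence through the isomorphism $A:Im(A^T)\to Im(A)$ to a sequence witnessing non-properness of $F_A$, with the non-proper value itself being controlled by the limit of $\widehat{F_A}(x_n)$. Taking that limit to be $0$ then forces $0\in S_{F_A}$.

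First I would invoke Theorem \ref{Theorem1} with $V=Im(A)$: the hypothesis $x_\infty\in\widehat{S}_A$ with all coordinates of $x_\infty$ non-zero puts us in condition 1). Following the proof of 1) $\Rightarrow$ 2) extracts $z_n\in Ker(A)$ and $v_n\in V$ with $||z_n||\to\infty$, $z_n^{1/3}/||z_n^{1/3}||\to x_\infty$, and $z_n^{1/3}+A(z_n^{2/3}*v_n)$ bounded. The rescaling $z_n'=\epsilon_n^3z_n$, $v_n'=v_n/\epsilon_n$ with $\epsilon_n=||z_n||^{-3/10}$ (as in the proof of 2) $\Rightarrow$ 1)) strengthens the bound to $O(1/\gamma_n^3)$; the corresponding $x_n=-A(z_n^{2/3}*v_n)+\frac{1}{3}v_n$ then lies in $V=Im(A)$, satisfies $\gamma_n=||x_n||\to\infty$, and $\widehat{F_A}(x_n)=x_n+A(x_n^3)=O(1/\gamma_n)\to 0$.

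Next I would use the Kernel-Image decomposition to lift to $F_A$. Since $x_n\in Im(A)$, choose $u_n\in Im(A^T)$ with $Au_n=x_n$; because $A:Im(A^T)\to Im(A)$ is a bijection with bounded inverse, $||u_n||\to\infty$. A direct calculation gives
\begin{eqnarray*}
A(F_A(u_n))=A(u_n)+A((Au_n)^3)=x_n+A(x_n^3)=\widehat{F_A}(x_n)\to 0.
\end{eqnarray*}
Following the proof of Proposition \ref{Proposition1}, pick $\tilde v_n\in Im(A^T)$ with $A\tilde v_n=A(F_A(u_n))$; then $\tilde v_n\to 0$, and setting $w_n=\tilde v_n-F_A(u_n)$ one checks $Aw_n=0$, so $w_n\in Ker(A)$. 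Finally,
\begin{eqnarray*}
F_A(w_n+u_n)=w_n+u_n+(Au_n)^3=w_n+F_A(u_n)=\tilde v_n\to 0,
\end{eqnarray*}
while $||w_n+u_n||^2=||w_n||^2+||u_n||^2\geq||u_n||^2\to\infty$ by orthogonality of $Ker(A)$ and $Im(A^T)$. Hence $0\in S_{F_A}$.

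The step I expect to require the most care is the verification that the rescaling yields $\widehat{F_A}(x_n)\to 0$ rather than merely bounded; this is precisely the quantitative refinement that pins the non-proper value at $0$, as opposed to some other point of $Im(A^T)\cap S_{F_A}$ that a purely qualitative application of Theorem \ref{Theorem1} would produce. Once this is secured, the rest is a formal translation through the isomorphism $A|_{Im(A^T)}$, identical in structure to the construction already used in the proof of Proposition \ref{Proposition1}.
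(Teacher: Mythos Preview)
Your proposal is correct and follows essentially the same approach as the paper: first use the proof of Theorem~\ref{Theorem1} to produce $x_n\in Im(A)$ with $\widehat{F_A}(x_n)\to 0$, then lift via the construction in the proof of Proposition~\ref{Proposition1}. The paper is terser, simply citing those two results; your write-up spells out the lifting step explicitly, which is what the paper's reference to Proposition~\ref{Proposition1} is meant to convey. One minor simplification you could adopt: the paper also points to the route 3)~$\Rightarrow$~1) in Theorem~\ref{Theorem1}, which gives the explicit sequence $x_n=\gamma_n x_\infty+\tfrac{1}{3\gamma_n}v$ with $\widehat{F_A}(x_n)\to 0$ directly, avoiding the passage through 2) and the rescaling argument.
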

 \begin{proof}
 Indeed, if $x_{\infty}\in \widehat{S}_A$, then by the proof of Part 2) $\Rightarrow $ Part 1) (or of Part 3) $\Rightarrow$ Part 1) ) in Theorem \ref{Theorem1}, there is a sequence $x_n\in Im(A)$ for which $||x_n||\rightarrow \infty$ and $x_n+A(x_n^3)\rightarrow 0$. Then Proposition \ref{Proposition1} concludes the proof. 
 \end{proof}

The relations of Theorems \ref{Theorem1} and \ref{Theorem2} to Conjecture C in \cite{truong} will be described in the conclusion part at the end of this paper. 

\subsection{Analysis of the proposed proof in \cite{liu}} In this subsection we analyse the proposed proof of the Jacobian conjecture given in \cite{liu}.

We will consider the following special class of matrices $A$. 

{\bf Definition.} We define $\mathcal{Z}$ to be the class of matrices $A$ so that for every $\lambda \in \mathbb{R}$, the equation $x+\lambda (Ax)^3=0$ has only one solution $x=0$. 

 It is well-known that if $A$ is a Druzkowski matrix then $A\in \mathcal{Z}$. Indeed, if we denote, for a vector $v$, by $\Delta [v]$ the diagonal matrix whose diagonal entries are coordinates of $v$, then a matrix $A$ is Druzkowski iff $A.\Delta [(Ax)^2]$ (and hence $\Delta [(Ax)^2].A$) is nilpotent for all $x$. Hence, the equation $x+\lambda (Ax)^3$, which can be deduced to $(Id+\lambda\Delta [(Ax)^2].A)x=0$, has only one solution $x=0$ since the matrix $Id+\lambda\Delta [(Ax)^2].A$ is invertible. 

In \cite{liu},  Liu gave a proof of the following claim.

{\bf Claim 1.} For every $A\in \mathcal{Z}$, the map $F_A(x)$ is proper. 

If Claim 1 were correct, then the Jacobian conjecture would follow, by Hadamard's theorem and Druzkowski's reduction, see \cite{druzkowski, liu, truong}. However, in this paper we will construct a counter-example to Claim 1. We do not know specifically where the proof in \cite{liu} breaks down, but we speculate that it is around Lemma 2.17 in \cite{liu}. 

Strictly speaking, \cite{liu} did not state Claim 1, but only the special case of it when $A$ is a Druzkowski matrix, which is again enough (in fact, equivalent to) for the Jacobian conjecture. However, if the readers scrutinise that paper, then it is found that the fact $A$ is a Druzkowski matrix is used only through Proposition 2.2, whose conclusion is exactly the definition of the class $\mathcal{Z}$. Hence, if the proof in \cite{liu} were correct, then it would give rise to a proof of Claim 1. 

The remaining of this paper is to prove the following. 
 
{\bf Claim 2.} The assertion of Claim 1 is incorrect.  
 
The proof of Claim 2 uses some sufficient conditions for a map $F_A$ to be non-proper. More precisely, we will use the  following, which is a verbatim copy of Remark 1.16 in \cite{truong}. 

\begin{remark}

We now use Corollary 1.15 to construct examples of $3\times 3$ matrices $A$ for which $F_A(x)$ is not proper. We write such a matrix as:
\[ \left( \begin{array}{ccc}
a_{1,1}&a_{1,2}&a_{1,3}\\
a_{2,1}&a_{2,2}&a_{2,3}\\
a_{3,1}&a_{3,2}&a_{3,3}\\
\end{array}\right) \]
We want $A$ to be a matrix of rank $2$, whose kernel $Ker(A)$ is generated by the vector $x_{\infty}=(1,1,1)$ and $x_{\infty}\in Im(AA^T)\cap Im(A^2A^T)$. Hence, after permuting of coordinates, from the assumption that $A$ is of rank $2$ we get 
\begin{eqnarray*}
(a_{3,1},a_{3,2},a_{3,3})=\lambda (a_{1,1},a_{1,2},a_{1,3})+\mu (a_{2,1},a_{2,2},a_{2,3}),
\end{eqnarray*}
for some real numbers $\lambda $ and $\mu$. 

The condition that $x_{\infty}\in Ker(A)$ is generated into  two equations $a_{1,1}+a_{1,2}+a_{1,3}=0$ and $a_{2,1}+a_{2,2}+a_{2,3}=0$. 

Since $Im(AA^T)=Im(A)$ (recall that $\mathbb{R}^m=Im(A^T)\oplus Ker(A)$), we have that $Im(AA^T)=\{(x,y,z):~z=\lambda x+\mu y\}$. Hence, the condition that $x_{\infty}\in Im(AA^T)$ is generated into that $1=\lambda +\mu$. 

It then follows that $Im(AA^T)=Im(A)$ is generated by $x_{\infty}$ and $(1,0,\lambda)$. Since $Im(A^2A^T)=Im(A^2)=A(Im(A))$ and $Ax_{\infty}=0$,  the condition that $x_{\infty}\in Im(A^2)$ is generated into that: there is a real number $c$ for which $A.(c,0,\lambda c)=x_{\infty}$. This is translated into that $a_{1,1}+a_{1,3}\lambda =a_{2,1}+a_{2,3}\lambda \not= 0$. 

We note that there are 4  constraints between the parameters $a_{1,1},a_{1,2},a_{1,3},a_{2,1},a_{2,2},a_{2,3},\lambda ,\mu$ (and an inequality $a_{2,1}+a_{2,3}\lambda \not= 0$), which give rise to a constructible set of dimension $4$.

\end{remark}
  
We now look at the following special case of the above remark: We choose $\lambda =1$ and $\mu =0$, and define $\alpha: =a_{1,1}+a_{1,3}=a_{2,1}+a_{2,3}\not= 0$. In this case, the matrices in the remark has the form: 

 \[ \left( \begin{array}{ccc}
a_{1,1}&a_{1,2}&a_{1,3}\\
a_{2,1}&a_{2,2}&a_{2,3}\\
a_{1,1}&a_{1,2}&a_{1,3}\\
\end{array}\right) \]

To make sure that $A$ has corank $1$, we add one more condition on determinants of $2\times 2$ minors, such as $a_{1,1}a_{2,3}-a_{1,3}a_{2,1}\not= 0$. For an explicit example, we can choose $a_{1,1}=1$, $a_{1,2}=-5$, $a_{1,3}=4$, $a_{2,1}=2$, $a_{2,2}=-5$ and $a_{2,3}=3$. In this case $\alpha =5$.  

By the above remark, for all such matrices $A$, the map $F_A(x)=x+(Ax)^3$ is non-proper. Here for the convenience of the readers, we reproduce the demonstration from \cite{truong}. We put $x_{\infty}=(1,1,1)$. The assumption  $\alpha\not= 0$ implies that for $x_{\alpha}=\frac{1}{\alpha}(1,0,1)$ we have $A.x_{\alpha}=x_{\infty}$. Note that both $x_{\infty}$ and $x_{\alpha}$ belong to $V=Im(A)$, and also that $x_{\infty}^3=x_{\infty}\in Ker(A)$, because of the conditions $a_{1,1}+a_{1,2}+a_{1,3}=0$ and $a_{2,1}+a_{2,2}+a_{2,3}=0$. We choose a sequence $\gamma _n\rightarrow\infty$ and define 
\begin{eqnarray*}
u_n=\gamma _nx_{\infty}-\frac{1}{3\gamma _n}x_{\alpha}. 
\end{eqnarray*}
Then
\begin{eqnarray*}
u_n^3&=&\gamma _n^3x_{\infty}-\gamma _nx_{\alpha}+O(1/\gamma _n),\\
u_n+A(u_n^3)&=&[\gamma _nx_{\infty}]+[A\gamma _n^3x_{\infty}-A\gamma _nx_{\alpha}]+O(1/\gamma _n)=O(1/\gamma _n).
\end{eqnarray*} 
Here $O(.)$ is the usual big-O notation. Note that $||u_n||\sim \gamma _n$, and hence since $u_n\in Im(A)$, there are $y_n\in Im(A^T)$ so that $Ay_n=u_n$ and $||y_n||\sim ||u_n||\sim \gamma _n$.  Also, since $u_n+A(u_n^3)\in Im(A)$, there is $v_n$ such that $Av_n=u_n+A(u_n^3)$ and $||v_n||\sim ||u_n+A(u_n^3)||\sim O(1/\gamma _n)$. Moreover, since $u_n+A(u_n^3)=A(y_n+(Ay_n)^3)$, we have that $x_n=v_n-[y_n+(Ay_n)^3]\in Ker(A)$. Hence, if we choose $z_n=x_n+y_n$, we have that $||z_n||\geq ||y_n||\rightarrow \infty$. Also, $z_n+(Az_n)^3=(x_n+y_n)+(Ay_n)^3=v_n=O(1/\gamma _n)$. Hence, $F_A$ is non-proper. 

[For the readers' convenience, here we make the above calculations even more explicit. We choose the matrix $A$ to be
 \[ \left( \begin{array}{ccc}
1&-5&4\\
2&-5&3\\
1&-5&4\\
\end{array}\right) \]
In this case $\alpha =5$, and $x_{\infty}=(1,1,1)$ and $x_{\alpha}=(1,0,1)/5$. 

We note that $x_{\infty}^i*x_{\alpha }^j=$ $x_{\infty}$ if $j=0$, and $=$ $(1/5)^jx_{\alpha}$ if $j>0$.

Then $u_n^3=\gamma _n^3x_{\infty}-(\gamma _n-\frac{1}{75\gamma _n}+\frac{1}{3375\gamma _n^3})x_{\alpha}$, and $u_n+A(u_n^3)=--\frac{1}{3\gamma _n}x_{\alpha } + (\frac{1}{75\gamma _n}-\frac{1}{3375\gamma _n^3})A(x_{\alpha})$. 

$Im(A^T)$ is generated by the rows of $A$, that is $(1,-5,4)$ and $(2,-5,3)$. We can compute for $a,b\in \mathbb{R}$:
\begin{eqnarray*}
A.(a(1,-5,4)+b(2,-5,3))=a(42,39,42)+b(39,38,39). 
\end{eqnarray*}
 Since the matrix
  \[ \left( \begin{array}{cc}
42&39\\
39&38\\
\end{array}\right) \]
is invertible, for every $c,d\in \mathbb{R}$, there exist unique values $a,b$ so that $(c,d,c)=A.(a(1,-5,4)+b(2,-5,3))$. We choose $x_{\infty}^*,x_{\alpha}^*\in Im(A^T)$ to be the unique solutions to $Ax_{\infty}^*=x_{\infty}$ and $Ax_{\alpha}^*=x_{\alpha}$.

Hence, we can choose
\begin{eqnarray*}
v_n&=&-\frac{1}{3\gamma _n}x_{\alpha}^*+(\frac{1}{75\gamma _n}-\frac{1}{3375\gamma _n^3})x_{\alpha},\\
y_n&=&\gamma _nx_{\infty}^*-\frac{1}{3\gamma _n}x_{\alpha}^*. 
\end{eqnarray*}
 
We then get 
\begin{eqnarray*}
x_n=v_n-[y_n+(Ay_n)^3]=-\gamma _n^3x_{\infty}-\gamma _n(x_{\infty}^*-x_{\alpha}).
\end{eqnarray*}
We note that $x_n\in Ker(A)$, since $x_{\infty}\in Ker(A)$ and $Ax_{\infty}^*=x_{\infty}=Ax_{\alpha}$. 
 ]

Now we show that such matrices are in class $\mathcal{Z}$, and hence deduce that Claim 1 is incorrect.

 Let $\lambda \in \mathbb{R}$, we will show that the equation $x+\lambda (Ax)^3=0$ has only one solution $x=0$. Indeed, let $x=(x_1,x_2,x_3)$ be a solution to $x+\lambda (Ax)^3=0$. If we write $Ax=(z_1,z_2,z_3)$ then $z_1=z_3$. Hence, the equations $x_1+\lambda z_1^3=0$ and $x_3+\lambda z_3^3=0$, as well as $z_1=z_3$, imply that $x_1=x_3$, and hence $x=(x_1,x_2,x_1)$. Then, using $$a_{1,1}+a_{1,3}=-a_{1,3}=a_{2,1}+a_{2,3}=-a_{2,3}=\alpha,$$ we have 
\begin{eqnarray*}
A.x&=&(a_{1,1}x_1+a_{1,3}x_1+a_{1,2}x_2,a_{2,1}x_1+a_{2,3}x_1+a_{2,2}x_2,a_{1,1}x_1+a_{1,3}x_1+a_{1,2}x_2)\\
&=&(\alpha (x_1-x_2),\alpha (x_1-x_2),\alpha (x_1-x_2)).
\end{eqnarray*} 
 Hence
 \begin{eqnarray*}
 \lambda (Ax)^3=\lambda \alpha ^3(x_1-x_2)^3(1,1,1). 
 \end{eqnarray*}
 
 Thus, the equation $x+\lambda (Ax)^3=0$ implies that $x_1=x_2$. However, in that case we have $Ax=0$, and hence $x=-\lambda (Ax)^3=0$, as wanted.

\subsection{Conclusions and acknowledgements}

In this paper we have illustrated further the advantage of checking properness of maps of the special form $F_A(x)$ over that of general polynomials, continuing the work from \cite{truong}.  We obtain various simple and linear algebraic criteria for both properness and non-properness of such maps. 

In \cite{truong}, we proposed the so-called Conjecture C which says that the set of matrices $A$ for which $F_A$ is proper is a constructible set. The stronger version of Conjecture C even mentions a specific system of algebraic equations and inequalities given in Theorem 1.11 in \cite{truong}. Note that if this stronger version of Conjecture C as mentioned holds, then the arguments in \cite{truong} shows that the following conjecture also holds: 

{\bf Conjecture D.} If a map $F_A(x)$ is non-proper, then $0\in S_{F_A}$. 

Theorem \ref{Theorem1} confirms Conjecture C in the "generic case", and likewise Theorem \ref{Theorem2} confirms Conjecture D in the "generic case". Here, it is not the same as "genericity" as commonly used in Algebraic Geometry. Rather, it is heuristic. Theorems \ref{Theorem1} and \ref{Theorem2} concern a vector whose all coordinates are non-zero. Since the set of vectors whose all coordinates are non-zero is dense in the set of all vectors, Theorem \ref{Theorem2} is in some sense a generic statement of Conjecture D. 

While we have shown that the proposed proof of the Jacobian conjecture in \cite{liu} - showing properness of maps in the class $\mathcal{Z}$-  is invalid,  we notice that the ideas put forward are fruitful in some perspectives. For example, the idea of looking at the non-proper direction set $\widehat{S}_A$ (dual to the non-proper value set $S_{f}$ introduced in \cite{jelonek, jelonek2}) and the useful observation that if $w\in Ker(A)$ then $F_A(x+w)=F_A(x)+w$ have been initiated in \cite{liu}. In fact, this paper and \cite{truong} get inspiration from \cite{liu}. 

Overall, we argue that the approach of showing properness of Druzkowski maps can be still promising, given the ease at it one can provide criteria for properness/non-properness of these maps.  Also, studying properness of general polynomials is interesting per se - with many applications and connections to topology, geometry, dynamical systems and other fields -   and can provide useful obstructions which one can attempt first in trying to solve the Jacobian conjecture. Indeed, since the class of Druzkowski matrices is complicated in higher dimensions, there is no hope to try to explore the Jacobian conjecture by classifying Druzkowski matrices. In contrast, a common approach is to select some easy to describe consequences of being Druzkowski matrices, and try to solve the Jacobian conjecture from these special properties only. More specifically, one could choose a constructible set $\Sigma$, knowing to contain all Druzkowski matrices, which is easy to describe, and try to prove  $F_A$ is either injective or proper, for $A\subset \Sigma$. When doing so, one needs to caution so that $\Sigma$ avoids the obstructions such as those in \cite{truong} and in this paper. We concern more closely Theorem \ref{Theorem1} in this paper and Theorems 1.10 and 1.11 in \cite{truong}. Given the "generic" nature of the assumptions in these results (as described in the discussion after the statement of Conjecture D), our guess is that if $\Sigma$ is not the correct class of matrices, it should fail for the criteria in the mentioned theorems. For example, for the class $\Sigma =\mathcal{Z}$ considered in \cite{liu}, the obstruction in Theorem 1.10 is not avoidable, and hence the approach therein is invalid.   

On the other hand, if one does not believe in the Jacobian conjecture, one can turn the faces of the coin around and use the above obstruction to guide finding a counter-example. The obstructions can also be used to construct or detect interesting proper or non-proper polynomial maps. One such example which we have plan to explore in the future is to test the Druzkowski form of Pinchuk's polynomial. More generally, after proving results on properness/non-properness of maps of the form $F_A(x)$, one can then trace back the construction in \cite{druzkowski} to apply back to original polynomials of more general form. For example, one idea for new proofs of results in \cite{jelonek, jelonek2, jelonek-lason} could be to first obtain the results for the associated maps of the form $F_A(x)$ and then translate backward.   

Let us close this by mentioning that there are other approaches of solving Jacobian's conjecture. For example, checking that Druzkowski maps are injective is also a popular. For Druzkowski maps, the injectivity is reduced to the non-existence of non-zero solutions to a certain polynomial equations of degree $3$. In \cite{truong2}, we verified by computer experiments that this gives rise to a very quick test. Therefore, we proposed therein to turn this question to a question on non-existence of any solution, and hence Hilbert's Nullstellensatz can be used. Roughly speaking, we want to show that a  certain system $\{f_1=\ldots =f_k=0\}$ has no solution on $\mathbb{C}$. This is equivalent  to that there are polynomials $g_1,\ldots ,g_k$ so that $\sum f_jg_j=1$. Then we can use computers to find these $g_1,\ldots ,g_k$ quickly, and then can hope to get some intelligent patterns from observing the experimental results.

{\bf Acknowlegements.} We thank Jiang Liu for helpful correspondences. This work is supported by Young Research Talents grant 300814 from Research Council of Norway.


\begin{thebibliography}{XXXXX}

\bibitem{druzkowski} L. Druzkowski, An effective approach to Keller's Jacobian conjecture, Math. Ann. 264 (1983), 303--313. 

\bibitem{jelonek-lason} Z. Jelonek and M. Laso\'n, Quantitative properties of the non-properness set of a polynomial map, arXiv: 1411.5011.

\bibitem{jelonek2} Z. Jelonek, Testing sets for properness of polynomial mappings, Math. Ann. 315 (1999), 1--35.

\bibitem{jelonek} Z. Jelonek, The set of points at which the polynomial mapping is not proper, Ann. Polon. Math. 58 (1993), 259--266.

\bibitem{liu} J. Liu, An optimization approach to Jacobian Conjecture, arXiv:2002.10249.


\bibitem{truong} T. T. Truong,  Some observations on the properness of identity plus linear powers, arXiv:2004.03309. 

\bibitem{truong2} T. T. Truong,  Some new theoretical and computational results around the Jacobian conjecture, revised and extended from arXiv:1503.08733. Accepted in International Journal of Mathematics. 


\end{thebibliography}
\end{document}